\newtheorem{theorem}{Theorem}[section]
\newtheorem*{theorem*}{Theorem}
\newtheorem{lemma}[theorem]{Lemma}
\newtheorem{corollary}[theorem]{Corollary}
\theoremstyle{remark}
\theoremstyle{definition}
\newtheorem{example}[theorem]{Example}
\newtheorem{definition}[theorem]{Definition}
\newcommand{\Spvek}[2][r]{%
  \gdef\@VORNE{1}
  \left(\hskip-\arraycolsep%
    \begin{array}{#1}\vekSp@lten{#2}\end{array}%
  \hskip-\arraycolsep\right)}
\def\vekSp@lten#1{\xvekSp@lten#1;vekL@stLine;}
\def\vekL@stLine{vekL@stLine}
\def\xvekSp@lten#1;{\def\temp{#1}%
  \ifx\temp\vekL@stLine
  \else
    \ifnum\@VORNE=1\gdef\@VORNE{0}
    \else\@arraycr\fi%
    #1%
    \expandafter\xvekSp@lten
  \fi}
\journal{}
\begin{document}

\begin{frontmatter}



\title{On the clone of aggregation functions on bounded lattices\footnote{Preprint of an article published by Elsevier in the Information Sciences 329, Pages 381-389. It is available online at: \newline https://www.sciencedirect.com/science/article/pii/S0020025515006933}}


\author{Radom\'ir Hala\v{s}}
\address{Palack\'y University Olomouc, Faculty of Science, Department of Algebra and Geometry, 17. listopadu 12, 771 46 Olomouc, Czech Republic}
\ead{radomir.halas@upol.cz}

\author{Jozef P\'ocs}
\address{Palack\'y University Olomouc, Faculty of Science, Department of Algebra and Geometry, 17. listopadu 12, 771 46 Olomouc, Czech Republic\\ 
and\\
Mathematical Institute, Slovak Academy of Sciences,\\ Gre\v s\'akova 6, 040 01 Ko\v sice, Slovakia}
\ead{pocs@saske.sk}

\begin{abstract}
The main aim of this paper is to study aggregation functions on lattices via clone theory approach. Observing that the aggregation functions on lattices just correspond to $0,1$-monotone clones, as the main result we show that for any finite $n$-element lattice $L$ there is a set of at most $2n+2$ aggregation functions on $L$ from which the respective clone is generated. Namely, the set of generating aggregation functions consists only of at most $n$ unary functions, at most $n$ binary functions, and lattice operations 
$\wedge,\vee$, and all aggregation 
functions of $L$ are composed of them by usual term composition. Moreover, our approach works also for infinite lattices (such as mostly considered bounded real intervals $[a,b]$), where in contrast to finite case infinite suprema and (or, equivalently, a kind of limit process) have to be considered.
\end{abstract}

\begin{keyword}
(monotone) clone\sep monotone function\sep aggregation function\sep lattice.

\MSC 06A15 

\end{keyword}

\end{frontmatter}

\section{Introduction}

Aggregation represents  a process of merging and combining several (usually numerical)
data in a single output. The mathematical theory of aggregation is based on the notion of an aggregation function describing the process of merging. 

Perhaps the most natural example of aggregation function is the arithmetic mean, which has been widely used in physics and all experimental sciences. In fact, aggregation functions appear in many branches of science where the fusion of information is requested, but more generally they appear in a pure mathematics (functional equations, theory of means and averages, measure and integration theory), in applied mathematics (probability, statistics, decision mathematics), computer and engineering sciences (artificial intelligence, operation research, data fusion, automatic reasoning etc.). During last decades they were successfully applied also in social sciences, economy, life sciences and other branches of research.

As the process of aggregation should represent in a certain sense the "synthesis" of input data, consequently the aggregation functions cannot be arbitrary and have to fulfill some natural minimal conditions. The statement that the output should be 
the synthetic value of inputs can be translated into the condition that the output value should lie in the same interval as the input ones, and, additionally, the least and the greatest values should be preserved.
The second natural widely accepted condition is nondecreasing monotonicity of the aggregation function, meaning that the increase of any of the input values should reflect this increase, or at worst, stay constant.

The theory of aggregation functions is quite well developed in case when the input (and, consequently, the output) values of 
these functions lie in a nonempty real interval $\mathbb I$, bounded or not. The formal definition is as follows:
\noindent
An aggregation function in $\mathbb I^n$ is a function $A: \mathbb I^n\mapsto \mathbb I$ that 
\begin{itemize}
\item [(i)] is nondecreasing (in each variable)
\item [(ii)] fulfills the boundary conditions
\begin{equation}\label{eq1}
\inf \, \{A(\mathbf{x}):\, \mathbf{x}\in \mathbb{I}^n\}=\inf\, \mathbb I \quad {\rm and} \quad \sup \, \{A(\mathbf{x}):\, \mathbf{x}\in \mathbb{I}^n\}=\sup \, \mathbb{I}.
\end{equation}
\end{itemize}

The integer $n$ represents the arity of the aggregation function.
For details one can refer the reader e.g. to the comprehensive 
monograph \cite{GMRP}.

Clearly, bounded real intervals can be viewed as special instances of (linearly ordered) lattices. Recall that a lattice is 
an algebra $(L;\vee,\wedge)$, where $L$ is a nonempty set with two binary operations $\vee$ and $\wedge$ representing 
suprema and infima. Let us mention that lattice theory is a very well established discipline of general algebra, there are 
several monographs on this topic, among them the most frequently used and quoted are the books by G. Gr\"atzer, \cite{G1,G2}.

Modifying the definition \eqref{eq1} of an aggregation function on a bounded real interval, quite recently and naturally the notion of aggregation function has been enlarged to bounded lattices. In contrast to real valued functions, not much is known on aggregation functions on lattices. We can mention e.g. the paper \cite{KM} devoted to a certain classification of aggregation functions on bounded partially ordered sets and lattices. 

One of the central problems connected with aggregation functions is how can they be constructed. We can mention several construction methods like transformed aggregation, composed aggregation, weighted aggregation, forming ordinal sums etc., for details we refer to the monograph \cite{GMRP}. Each of the above mentioned methods typically relies on a very specific approach and the methods usually have a quite different issue. For example, there is a group of methods characterized by the property "from simple to complex". In a classical case, the idea is based on standard arithmetical operations on the real line and fixed real functions. Another group of aggregation functions starts from given aggregation functions to construct the new ones.

The focus of our paper is a bit different. We start from a given lattice $L$ and ask the following question:
\begin{itemize}
\item[(Q)] Is there some uniform construction method of aggregation functions on $L$ (i.e., a method not depending on $L$) and a certain simple uniform set of aggregation functions on $L$ (i.e., functions of the same kind whose number depends only on the cardinality of the lattice $L$), such that applying the uniform method to this uniform set we obtain all the aggregation functions on $L$? 
\end{itemize}
By our knowledge, this approach to a study of aggregation functions on lattices is  new and promising better understanding of their construction methods. In this paper, by using specific methods of universal algebra, we give a positive answer to the above question. Moreover, our solution is constructive, i.e. given a lattice $L$, we are able to give a concrete set of generating aggregation functions on $L$ from which each aggregation function on $L$ can be obtained by applying an uniform method.

Our approach relies on the observation that from the point of view of universal algebra, aggregation functions on lattices form a clone (or, equivalently, a composition-closed set of functions containing projections). We show that for any finite lattice $L$ with $n$ elements this clone is finitely generated, and, moreover, we present explicitly at most $(2n+2)$-element generating set of aggregation functions, consisting of lattice binary operations, at most $n$ unary operations (certain characteristic functions) 
and at most $n$ binary aggregation functions (certain $0,1$-testing functions). Our construction shows that any aggregation function on $L$ arises as the usual term composition of generating functions. In other words, we show that in fact there is no need to use several quite different construction methods, but applying an uniform one, the composed aggregation, we obtain all the aggregation functions. We also stress the fact that our generating set of aggregations functions is very simple one, and its cardinality grows only linearly with respect to the cardinality of a lattice. Moreover, as our approach does not depend on the 
cardinality of $L$, any aggregation function on a lattice of infinite cardinality (e.g. such as a bounded real interval 
$[a,b]$) can be obtained as (possibly infinite) supremum of the generating set (of the same cardinality as a lattice) of aggregation functions. 

\section{Algebras, clones and near-unanimity functions}

First of all, we recall some necessary concepts from universal algebra, cf. \cite{Burris} or \cite{McKenzie}. By an {\it algebra} we mean a structure 
$(A;F)$, where $A$ is a nonempty set (called the {\it support} of the algebra) and $F$ is a (possibly empty) set of operations on $A$. If there is no danger of confusion, we usually do not distinguish between the algebra and its support.
To simplify expressions, if $f\in F$ is an $n$-ary operation and $\mathbf{x}=(x_1,\dots,x_n)\in A^n$, the 
evaluation of $f$ in $\mathbf{x}$ will be denoted by $f(\mathbf{x})$. 
 
A nonempty subset $B$ of $A$ is called a 
{\it subalgebra} of $(A;F)$ whenever $B$ is closed under all operations of $F$, i.e. if $f\in F$  is an $n$-ary 
operation, then $f(\mathbf{x})\in B$ for any $\mathbf{x}\in B^n$. Clearly, then the structure $(B;F)$ is again the algebra where 
the operations on $B$ are just those on $A$ but restricted to $B$. 

Further, given an algebra $(A;F)$ and $d\in \mathbb N$, by the {\it direct} $d$-th {\it power} of $(A;F)$ we mean an algebra 
$(A^d,F)$ with the support being the Cartesian $d$-th power of $A$, and the operations defined component-wise, i.e. 
for any $n$-ary operation symbol $f\in F$ and any $n\times d$ matrix $(a_{ij})$ of elements of $A$ we have
$$f((a_{11},\dots,a_{1d}),\dots,(a_{n1},\dots,a_{nd}))=(f(a_{11},\dots,a_{n1}),\dots,f(a_{1d},\dots,a_{nd})).$$
\noindent
We say that a $k$-ary function $g$ on $A$ {\it preserves the subalgebras of the direct $d$-th power} $(A^d,F)$ if 
for any subalgebra $B$ of $(A^d,F)$, whenever we have a $d\times k$ matrix $(b_{ij})$ of elements of $A$ all the columns of 
which belong to $B$, then so does the $d$-tuple when applying $g$ to its rows:

$$ 
\Spvek[c]{b_{11};b_{21};\vdots;b_{d1}},\Spvek[c]{b_{12};b_{22};\vdots;b_{d2}},\dots, \Spvek[c]{b_{1k};b_{2k};\vdots;b_{dk}}\in B \ \ \Longrightarrow\ \ \Spvek[c]{g(b_{11},b_{12},\dots,b_{1k});g(b_{21},b_{22},\dots,b_{2k});\vdots; g(b_{d1},b_{d2},\dots,b_{dk})}\in B.
$$      

The notion of a clone generalizes that of a monoid in a sense that it can be viewed as a set of selfmaps of 
a set $A$ that is closed under composition and containing the identical mapping. 
For an overview of clone theory we refer to \cite{Cs}, \cite{KP}, \cite{KPS} or \cite{Lau}.

Particularly, a {\it clone} on a set $A$ is a set of (finitary) operations on $A$ which contains all of the projection operations on $A$ and is closed under the composition, where projections and composition are formally defined as follows:\\
\noindent
Let $A$ be a set and $n\in\mathbb N$ be a positive integer. For any $i\leq n$, the {\it $i$-th $n$-ary projection} 
is for all $x_1,\dots,x_n\in A$ defined by 
$$p_i^n(x_1,\dots,x_n):=x_i.$$
Composition forms from one $k$-ary operation $f:A^k\mapsto A$ and $k$ $n$-ary operations $g_1,\dots,g_k:A^n\mapsto A$, 
an $n$-ary operation $f(g_1,\dots,g_k):A^n\mapsto A$ defined by
\begin{equation}\label{eq2}
f\big(g_1,\dots,g_k\big)(x_1,\dots,x_n):=f\big(g_1(x_1,\dots,x_n),\dots,g_k(x_1,\dots,x_n)\big),
\end{equation}
for all $x_1,\dots,x_n\in A$. 
For $k=n=1$, composition is a usual product of selfmaps.

Clones as sets of functions can be viewed by another equivalent way, namely, as the sets of (finitary) relations on $A$ that are 
preserved by all of the functions from the clone. More precisely, let $\rho\subseteq A^h$ be an $h$-ary relation on $A$,
and $f:A^n\mapsto A$ an $n$-ary operation on $A$. We say that $f$ {\it preserves} $\rho$ (or $\rho$ is {\it invariant} with respect to $f$), if for any $h\times n$ matrix of 
elements of $A$, if each of the $n$ columns $\mathbf{c}_1,\dots,\mathbf{c}_n$ of the matrix belongs to $\rho$, then the application 
of $f$ to the rows $\mathbf{r}_1,\dots,\mathbf{r}_h$ of the matrix also belongs to $\rho$, i.e. $(f(\mathbf{r}_1),\dots,f(\mathbf{r}_h))\in\rho$, which fact will be denoted by $f\triangleleft \rho$.  

Clearly, $\triangleleft$ defines a binary relation between 
the set ${\mathcal O}_A$ of all (finitary) functions on $A$ and ${\mathcal R}_A$, the set of all (finitary) relations on $A$.
It is well known that any binary relation induces a Galois connection between the corresponding sets, i.e., for any set 
$F\subseteq {\mathcal O}_A$ of functions on $A$ and $R\subseteq {\mathcal R}_A$ a set of relations on $A$, we consider the sets 
$$\mathsf{Inv}\, F:=\{\rho\in {\mathcal R}_A:\,\,f\triangleleft \rho \;\text{for any}\; f\in F\},$$ 
and
$$\mathsf{Pol}\, R:=\{f\in {\mathcal O}_A:\,\,f\triangleleft \rho \;\text{for any}\; \rho\in R\}.$$ 
In other words, the set $\mathsf{Inv}\, F$ consists of relations {\it invariant} with respect to all functions $f\in F$, and dually, 
$\mathsf{Pol}\, R$, called the set of {\it polymorphisms} of $R$, consists of operations to which are invariant all of the relations of $R$.
To simplify notation, we shall write $\mathsf{Pol}\,\rho$ instead of $\mathsf{Pol}\{\rho\}$ whenever $R=\{\rho\}$ is a singleton.
Consequently, combining the maps $\mathsf{Inv}$ and $\mathsf{Pol}$, we obtain a pair of closure operators defined on ${\mathcal O}_A$ and ${\mathcal R}_A$ by
$$F\mapsto \mathsf{Pol}\,\mathsf{Inv}\, F,$$
$$R\mapsto \mathsf{Inv}\,\mathsf{Pol}\, R.$$ 
It is well known and easy to see that the clones (of functions) are just the closed sets with respect to the closure operator $\mathsf{Pol}\,\mathsf{Inv}$, i.e. $\mathcal{C}=\mathsf{Pol}\,\mathsf{Inv}\, \mathcal{C}$, and thus can be viewed as sets of functions invariant with respect to appropriate sets of finitary relations. The clone $\mathsf{Pol}\,\mathsf{Inv}\,F$ is the least clone containing the set $F$ of functions, and thus called the clone {\it generated} by $F$. 
We call a clone {\it finitely generated} if is has a finite generating set of functions. Let us remark that closed sets with respect to the dual closure operator $\mathsf{Inv}\,\mathsf{Pol}$ are called {\it relational clones}, and both the closed sets form (with respect to set inclusion) dually isomorphic complete lattices. It is known that there are countably many clones on a two-element
set, and their lattice is completely understood and described since the work by E. Post \cite{Post}. However, there is a continuum clones 
on a set with at least of three elements and a full description of this lattice seems to be hopeless.

Yet another definition of a clone from the point of view of universal algebra: clones on $A$ are just term operations of algebras on $A$. This easily follows from the fact that by definition the terms of an algebra contain the projections and are composed in the same way as functions in clones.       

Evidently, the least clone on $A$ (called the {\it trivial clone} on $A$) contains just all the projections, the greatest one (called the {\it full clone} on $A$)
consists of all the functions on $A$, i.e. it coincides with ${\mathcal O}_A$.
Clones which are covered by the full clone, called {\it maximal} clones,  are of a particular importance. It has been proved that 
for a finite set $A$, any clone $\mathcal{C}\neq {\mathcal O}_A$ on $A$ is contained in a maximal one, and by famous Rosenberg
classification \cite{Ros}, all maximal clones on $A$ are of the form $\mathsf{Pol}\,\rho$ for six types of single relations $\rho$ on $A$. 
One of these six types are bounded orders on $A$, i.e. for any bounded order relation $\leq$ on $A$ (i.e. with the least and the greatest element), the clone $\mathsf{Pol}\leq$ is maximal. These clones are referred to as {\it monotone} clones on $A$. Clearly, for an $n$-ary function $f$ on $A$ we have $f\in \mathsf{Pol}\leq$ if and only if for any $\mathbf{x},\mathbf{y}\in A^n$, $\mathbf{x}\leq \mathbf{y}$ yields $f(\mathbf{x})\leq f(\mathbf{y})$.  

All the remaining five types of maximal clones are known to be finitely generated, cf. \cite{Lau}. For monotone clones, it is not always the 
case. It was quite surprising when Tardos in 1986 \cite{Ta} presented the 8-element partially ordered set (see Figure \ref{fig1}) the monotone clone of which is not finitely generated.

\begin{figure}
\begin{center}
\includegraphics{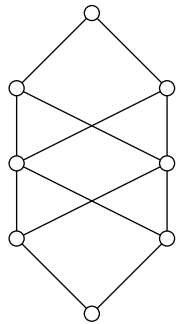}
\caption{A poset, the monotone clone of which is not finitely generated}
\label{fig1}
\end{center}
\end{figure}

On the other hand, clones on a finite set $A$ containing the so-called near-unanimity function are known to be finitely 
generated. Recall that for $n\geq 3$, an $n$-ary function $f$ on $A$ is called a {\it near-unanimity function} if 
$$f(x,\dots,x,y,x,\dots,x)=x$$  
for any $x,y\in A$, i.e., if any $n-1$ of the $n$ inputs coincide, the output of $f$ takes the same value. For $n=3$ 
we have the identities 
$$f(x,x,y)=f(x,y,x)=f(y,x,x)=x$$
for any $x,y\in A$, and these functions are called the {\it majority functions} on $A$.

Typically, if $(L;\vee,\wedge)$ is a lattice, then the functions defined by
\begin{equation}\label{eq3a} 
f(x,y,z):=(x\wedge y)\vee (y\wedge z)\vee (x\wedge z)\stepcounter{equation}\tag{{\theequation}a}
\end{equation} 
or
\setcounter{equation}{2}
\begin{equation}\label{eq3b}
g(x,y,z):=(x\vee y)\wedge (y\vee z)\wedge (x\vee z) \stepcounter{equation}\tag{{\theequation}b}
\end{equation}

are apparently the monotone majority functions on $L$.

That the clones on finite sets containing a near-unanimity function are finitely generated can be deduced 
from the following theorem by Baker and Pixley, see \cite{BP}:

\begin{theorem} {\rm(}Baker-Pixley Theorem{\rm )}\\
Let $\mathcal{C}$ be clone on a finite set $A$, let $f\in {\mathcal O}_A$ and assume that $\mathcal{C}$ contains a $(d+1)$-ary near-unanimity function. Then $f\in \mathcal{C}$ if and only if 
$f$ preserves all subalgebras of the direct $d$-th power of the algebra $(A;\mathcal{C})$.
\end{theorem}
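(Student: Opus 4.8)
The plan is to prove the two implications separately, with essentially all the content sitting in the ``if'' direction, which I would reduce to a local interpolation statement and then settle by an induction exploiting the near-unanimity function. The ``only if'' direction is routine: if $f\in\mathcal C$ then, $\mathcal C$ being a clone, $f$ is a term operation of the algebra $(A;\mathcal C)$, and a subalgebra $B$ of the power $(A^d,\mathcal C)$ is by definition a subset of $A^d$ closed under every operation of $\mathcal C$ applied componentwise. Unwinding the componentwise action shows that ``$B$ is closed under $\phi\in\mathcal C$'' is exactly the assertion that $\phi$ preserves $B$ in the matrix sense of the definition given above; hence every member of $\mathcal C$, in particular $f$, preserves every such $B$.

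For the ``if'' direction I would first record the key reformulation: the hypothesis that $f$ (say $n$-ary) preserves all subalgebras of $(A^d,\mathcal C)$ is equivalent to the local interpolation property that \emph{on every $d$-element set of arguments $\{\mathbf a_1,\dots,\mathbf a_d\}\subseteq A^n$ there is some $g\in\mathcal C$ with $g(\mathbf a_j)=f(\mathbf a_j)$ for all $j$}. To see this, arrange the points as the rows of a $d\times n$ matrix and let $\mathbf c_1,\dots,\mathbf c_n\in A^d$ be its columns. Let $B$ be the subalgebra of $(A^d,\mathcal C)$ generated by $\mathbf c_1,\dots,\mathbf c_n$; since $\mathcal C$ is a clone, this is exactly $\{g(\mathbf c_1,\dots,\mathbf c_n):g\in\mathcal C\ \text{$n$-ary}\}$. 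As the columns lie in $B$ and $f$ preserves $B$, applying $f$ to the rows yields an element of $B$, i.e. $(f(\mathbf a_1),\dots,f(\mathbf a_d))=g(\mathbf c_1,\dots,\mathbf c_n)$ for some $g\in\mathcal C$; reading this componentwise gives $g(\mathbf a_j)=f(\mathbf a_j)$ for each $j$, which is the desired interpolant.

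It then remains to upgrade local interpolation on $d$-element sets to interpolation on the whole finite set $A^n$, and this is where the $(d+1)$-ary near-unanimity term $t\in\mathcal C$ enters. I would prove by induction on $|T|$ that for every $T\subseteq A^n$ on whose $d$-element subsets $f$ is interpolable by $\mathcal C$, $f$ is interpolable on all of $T$; applying this with $T=A^n$ forces $f\in\mathcal C$. The base case $|T|\le d$ is exactly the reformulation above. For the inductive step with $|T|>d$, choose distinct points $\mathbf b_0,\dots,\mathbf b_d\in T$; by the induction hypothesis each $T_i:=T\setminus\{\mathbf b_i\}$ carries an interpolant $g_i\in\mathcal C$, and I would set $g:=t(g_0,\dots,g_d)\in\mathcal C$. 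Checking $g|_T=f|_T$ is the crux: at a point $\mathbf b\in T$ distinct from all $\mathbf b_i$ all $d+1$ values $g_i(\mathbf b)$ equal $f(\mathbf b)$, while at $\mathbf b=\mathbf b_j$ exactly the $d$ values $g_i(\mathbf b)$ with $i\ne j$ are forced to $f(\mathbf b)$, so in either case at least $d$ of the $d+1$ arguments of $t$ coincide and the near-unanimity identity returns $f(\mathbf b)$.

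I expect the main obstacle to be the inductive patching step: one must choose the critical points $\mathbf b_0,\dots,\mathbf b_d$ so that each $g_i$ covers all of $T$ except $\mathbf b_i$, and then verify that at every point the number of agreeing arguments of $t$ exceeds $d$, which is precisely what the arity $d+1$ of the near-unanimity function is tailored to guarantee. The interpolation reformulation and the finiteness of $A^n$, which ensures the induction terminates, are the two background facts making the argument go through.
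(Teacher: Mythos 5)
Your proposal cannot be compared against the paper's own proof for a simple reason: the paper does not prove this theorem. It is stated as a known result with a citation to Baker and Pixley \cite{BP}, and only its consequence (Corollary \ref{cor1}, finite generation of clones containing a near-unanimity function) is proved in the text. Judged on its own, your proof is correct, and it is the standard argument for the clone-theoretic form of the Baker--Pixley theorem. The ``only if'' direction is indeed immediate once one unwinds the componentwise action on $A^d$. Your reformulation of the hypothesis as $d$-point interpolation is the right key lemma; its proof hinges on the fact that the subalgebra of $(A^d;\mathcal C)$ generated by the columns $\mathbf c_1,\dots,\mathbf c_n$ is exactly $\{g(\mathbf c_1,\dots,\mathbf c_n): g\in\mathcal C \text{ $n$-ary}\}$, which uses both that $\mathcal C$ contains the projections (so the generators lie in this set) and that it is closed under composition (so the set is closed under $\mathcal C$). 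The patching induction is also sound: with $g:=t(g_0,\dots,g_d)$, at every point of $T$ at least $d$ of the $d+1$ arguments of $t$ equal the value of $f$, so the near-unanimity identities (together with the idempotence $t(x,\dots,x)=x$ they imply) return $f$'s value, and finiteness of $A^n$ terminates the induction. One cosmetic improvement: state the interpolation reformulation for arbitrary lists of $d$ points (repetitions allowed) rather than $d$-element sets --- the matrix argument gives this for free since rows need not be distinct --- as this cleanly covers both the base case $|T|\le d$ of your induction and the degenerate situation $|A^n|<d$, neither of which is literally an instance of interpolation on a $d$-element set.
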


\begin{corollary}\label{cor1}
Let $\mathcal{C}$ be a clone on a finite set $A$ containing a near-unanimity function. Then $\mathcal{C}$ is finitely generated.
\end{corollary}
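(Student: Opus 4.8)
The plan is to derive the corollary directly from the Baker--Pixley Theorem by exploiting the finiteness of $A$. First I would fix a near-unanimity function $m\in\mathcal C$ and let $d+1$ be its arity, so that the theorem characterises membership in $\mathcal C$ through preservation of the subalgebras of the direct power $(A^d,\mathcal C)$. The decisive structural observation is that, since $A$ is finite, the set $A^d$ is finite; hence $(A^d,\mathcal C)$ has only finitely many subalgebras, each of them a finite subset of $A^d$, and there are only finitely many subsets of $A^d$ altogether. This finiteness is exactly what will bound the size of the generating set we are after.

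Next I would recall the clone-theoretic description of generated subalgebras: because $\mathcal C$ contains all projections and is closed under composition, the subalgebra of $(A^d,\mathcal C)$ generated by a subset $S\subseteq A^d$ is precisely
$$\mathrm{Sg}_{\mathcal C}(S)=\{\,f(s_1,\dots,s_k):\ f\in\mathcal C\ \text{$k$-ary},\ s_1,\dots,s_k\in S\,\},$$
the point being that this set already contains $S$ (via projections) and is closed under every operation of $\mathcal C$ (via composition). I would then build a finite generating set as follows: for each of the finitely many subsets $S\subseteq A^d$ and each element $b\in\mathrm{Sg}_{\mathcal C}(S)$, select one witnessing operation $f_{S,b}\in\mathcal C$ together with a tuple of elements of $S$ on which it takes the value $b$. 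Collecting all these (finitely many) witnesses together with $m$ yields a finite set $G\subseteq\mathcal C$; put $\mathcal C'=\langle G\rangle$, so that $\mathcal C'\subseteq\mathcal C$.

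The crux of the argument is to show that $(A^d,\mathcal C)$ and $(A^d,\mathcal C')$ have exactly the same subalgebras. Since $\mathcal C'\subseteq\mathcal C$, every $\mathcal C$-subalgebra is automatically closed under the operations of $G$ and hence is a $\mathcal C'$-subalgebra. For the converse I would verify that the two closure operators coincide: trivially $\mathrm{Sg}_{\mathcal C'}(S)\subseteq\mathrm{Sg}_{\mathcal C}(S)$, while for any $b\in\mathrm{Sg}_{\mathcal C}(S)$ the chosen witness $f_{S,b}\in G\subseteq\mathcal C'$ already produces $b$ from elements of $S$, giving $b\in\mathrm{Sg}_{\mathcal C'}(S)$. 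Thus $\mathrm{Sg}_{\mathcal C'}(S)=\mathrm{Sg}_{\mathcal C}(S)$ for every $S\subseteq A^d$, so the closure operators are identical and therefore their closed sets, i.e. the subalgebras, agree.

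Finally, both $\mathcal C$ and $\mathcal C'$ contain the $(d+1)$-ary near-unanimity function $m$, so the Baker--Pixley Theorem applies to each: a function belongs to $\mathcal C$ (respectively $\mathcal C'$) precisely when it preserves all subalgebras of $(A^d,\mathcal C)$ (respectively $(A^d,\mathcal C')$). As these two families of subalgebras coincide, the two membership criteria are identical, which forces $\mathcal C=\mathcal C'=\langle G\rangle$; hence $\mathcal C$ is finitely generated. I expect the delicate step to be the verification in the previous paragraph that a single finite choice of witnesses reproduces every subalgebra closure uniformly over all subsets $S$ — this is where the finiteness of $A^d$ is genuinely used, and where one must be careful that the same finite set $G$ works simultaneously for every $S$ rather than merely for one at a time.
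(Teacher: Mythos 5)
Your proof is correct and follows essentially the same route as the paper: both exploit the finiteness of $A^d$ to select a finite set of witness functions from $\mathcal{C}$, form the subclone generated by these witnesses together with the near-unanimity function, show this subclone has exactly the same subalgebras of the direct $d$-th power as $\mathcal{C}$, and then apply the Baker--Pixley Theorem to both clones to conclude they are equal. The only cosmetic difference is the choice of witnesses: the paper picks, for each non-subalgebra $B$ of $(A^d;\mathcal{C})$, one function $f_B\in\mathcal{C}$ that fails to preserve $B$, whereas you pick positive witnesses $f_{S,b}$ certifying $b\in\mathrm{Sg}_{\mathcal{C}}(S)$ --- both devices serve the identical purpose of forcing the two subalgebra families to coincide.
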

\begin{proof}
By Baker-Pixley Theorem, a function $f\in {\mathcal O}_A$ belongs to $\mathcal{C}$ if and only if $f$ preserves all subalgebras of the direct $d$-th power of the algebra $(A;\mathcal{C})$. Since $A$ is finite, there are only finitely many subalgebras of $(A^d;\mathcal{C})$, the 
same holds for non-subalgebras $B$ of $(A^d;\mathcal{C})$. For any such $B$ consider a function $f_B\in \mathcal{C}$ such that 
$f_B$ does not preserve $B$. By previous arguments, such a function must exists.
 
We show that the collection of all the functions $f_B$ together with a near-unanimity function yields a finite generating set of the clone $\mathcal{C}$. Indeed, denote by $\mathcal{D}$ the clone generated by a near-unanimity function and by the functions $f_B$. Clearly, we have the inclusion $\mathcal{D}\subseteq\mathcal{C}$.
We claim that the algebras $(A^d;\mathcal C)$ and $(A^d;\mathcal D)$ have exactly the same subalgebras.
Obviously, as $\mathcal D\subseteq \mathcal C$, $(A^d;\mathcal C)$ has possibly less subalgebras than $(A^d;\mathcal D)$. 
But by the construction of $\mathcal D$, for any non-subalgebra $B$ of $(A^d;\mathcal C)$ there is a function $f_B\in \mathcal{D}$ such that $f_B$ does not preserve $B$, i.e. $B$ is a non-subalgebra of $\mathcal D$ as well. 

Finally, as $(A^d;\mathcal C)$ and $(A^d;\mathcal D)$ have the the same subalgebras, applying Baker-Pixley Theorem we obtain the desired equality $\mathcal C=\mathcal D$.
\end{proof}

Consequently, for any finite set $A$, monotone clones of the form $\mathcal{C}=\mathsf{Pol}\leq$, where $\leq$ is a bounded lattice order on $A$ with 
$0$ and $1$ as the least and the greatest element, 
are finitely generated. The same concerns the clones $\mathsf{Pol}\{\leq,0,1\}$ consisting of all monotone functions of bounded lattice orders $\leq$ preserving the bounds $0$ and $1$, i.e. fulfilling the boundary conditions
\begin{equation}\label{eq4} 
f(0,\dots,0)=0 \quad \text{ and } \quad f(1,\dots,1)=1.
\end{equation} 
This follows from the fact that the majority lattice functions given by \eqref{eq3a} or \eqref{eq3b} satisfy the boundary conditions \eqref{eq4}.  

We will see that these clones correspond just to aggregation functions on bounded posets, thus we refer to them as {\it aggregation clones} on bounded posets.

It can be seen from Corollary \ref{cor1} that although the aggregation clones on finite bounded lattices are finitely generated, the proof of this fact does not give any simple algorithm how to produce their generating sets, as well as it does not yield to any a priori estimates of arities of generating functions. In what follows we fill this gap by showing that for an $n$-element lattice, always at most $2n+2$ generating functions are enough with arities bounded by 2.

\section{Aggregation functions on lattices}

Recall that an agreggation function on $[0,1]$ is a function $A: [0,1]^n\to [0,1]$ which is monotone increasing, i.e. 
$A(\mathbf{x})\leq A(\mathbf{y})$ whenever $\mathbf{x}\leq \mathbf{y}$, (i.e., $x_i\leq y_i$ for all $i\in\{1,\dots,n\}$), and 
$A(0,\dots,0)=0, A(1,\dots,1)=1$. Obviously, the framework of aggregation functions can be modified by considering functions on any closed real interval, and clearly to any partially ordered structure with bounds (see e.g. \cite{KM}):

\begin{definition}
Let $(P,\leq,0,1)$ be a bounded partially ordered set (poset), let $n\in \mathbb N$. A mapping $A: P^n\to P$ is called 
an ($n$-ary) aggregation function on $P$ if it is monotone increasing, i.e. for any $\mathbf{x}, \mathbf{y}\in P^n$: 
$$A(\mathbf{x})\leq A(\mathbf{y})\,\, \text{ whenever }\,\, \mathbf{x}\leq \mathbf{y},$$
and it satisfies boundary conditions 
$$A(0,\dots,0)=0 \quad\text{ and }\quad A(1,\dots,1)=1.$$
\end{definition}  

For a more detailed discussion on aggregation functions on posets or lattices we recommend the paper by Demirci \cite{Dem}. 
Special types of aggregation functions on posets, especially triangular norms or conorms, are studied e.g. in \cite{DeBM,M2,M1}. 
It is easy to see that considering $P=[0,1]$ to be the standard interval of reals with the usual ordering, we obtain the   
classical definition of an aggregation function. A particular example of a binary aggregation function on a $3$-element chain is schematically shown on Figure \ref{fig3}.

\begin{figure}
\begin{center}
\includegraphics[scale=1.25]{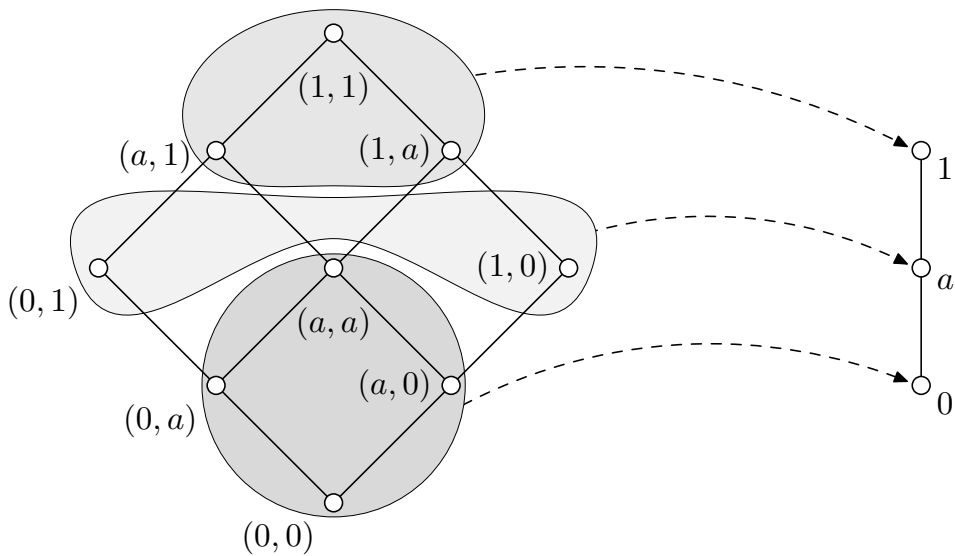}
\caption{A binary aggregation function on a three-element chain}
\label{fig3}
\end{center}
\end{figure}

From the point of view of clone theory, aggregation functions on bounded posets are just the members of the corresponding aggregation clone. Already the fact that they form a clone has many important consequences: for example, we obtain "for free" some of their construction methods known as composed aggregation (see e.g. the monograph \cite{GMRP}). These are just special instances of that obtained from a composition of functions in a clone \eqref{eq2}.
The following illustrative example modifies one of the methods called the double aggregation: 

\begin{example}
Let $P$ be a bounded poset, let $A: P^2\mapsto P$, $B: P^n\mapsto P$ and $C: P^m\mapsto P$ be aggregation 
functions on $P$. Then the function $D_{A;B,C}: P^{n+m}\mapsto P$ defined by
$$D_{A;B,C}(x_1,\dots,x_{n+m}):=A(B(x_1,\dots,x_n),C(x_{n+1},\dots,x_{n+m}))$$
is an $(n+m)$-ary aggregation function on $P$.\\
Indeed, we may consider the functions $B$ and $C$ as $(n+m)$-ary, $B$ depending only on the first $n$ variables, 
$C$ depending on the last $m$ variables. Then applying \eqref{eq2} for $f:=A, g_1:=B$ and $g_2:=C$, we obtain just the 
function $D_{A;B,C}$.
\end{example}

In order to generate the aggregation clone $\mathcal{C}_L$ of a finite lattice $L$, we use the following unary and binary functions: 

For any $a\in L$ we define $\chi_a\colon L\to L$ by
\begin{equation}\label{e1}
\chi_a(x)=
\begin{cases}&1, \text{ if }x\geq a, x\neq 0; \\
             &0, \text{ otherwise.}\\   
\end{cases}
\end{equation} 

Obviously, $\chi_a$ is an aggregation function for all $a\in L$. Moreover, it represents a characteristic function of the principal filter $F(a)=\{x\in L:x\geq a\}$ generated by $a$, provided $a\neq 0$. 

Further, for $b\in L$ we define a binary function $\oplus_b\colon L\times L\to L$ by
\begin{equation}\label{e2}
x\oplus_b y=\begin{cases}&1, \text{ if }x=1,y=1; \\
                         &0, \text{ if }x=0,y=0;\\   
                         &b, \text{ otherwise.}    
              \end{cases}
\end{equation}

Let us note that we prefer the infix notation $x\oplus_b y$ for this family of operations, since any of them is associative and commutative, as it can be easily verified. Also, for any $b\in L$ the function $\oplus_b$ is monotone and satisfies the boundary conditions, i.e., it is a binary aggregation function on $L$.   
As it is common for the associative binary operations, for a positive integer $n\geq 2$ the symbol  
$$ \bigoplus_{i=1}^n{\hspace{-0.6mm}_b}\ x_i=x_1\oplus_b x_2\oplus_b \dots \oplus x_{n-1}\oplus_b x_n$$ will denote the repeated composition (in an arbitrary order) of the operator $\oplus_b$.

Let $b\in L$ be an element such that $0<b<1$. Then 
$$ \bigoplus_{i=1}^n{\hspace{-0.6mm}_b}\ x_i \neq b \quad\text{ iff }\quad (x_1,\dots,x_n)=(0,\dots,0)\ \text{ or }\ (x_1,\dots,x_n)=(1,\dots,1).$$

For $\mathbf{a}=(a_1,\dots,a_n)\in L^n$ we denote by $J_{\mathbf{a}}=\{1\leq i\leq n: a_i\neq 0\}$ the set of non-zero indices 
of $\mathbf{a}$.

Let $n\geq 2$ be an integer and $f\colon L^n\to L$ be an aggregation function. For any $\mathbf{a}\in L^n$ with $(0,\dots,0)<\mathbf{a}<(1,\dots,1)$ (and, consequently, $J_{\mathbf{a}}\not =\emptyset$) we define the function
$h_{\mathbf{a}}\colon L^n\to L$ by putting

\begin{equation}\label{e3}
h_{\mathbf{a}}(x_1,\dots,x_n)=\bigwedge_{i\in J_{\mathbf{a}}}\chi_{a_i}(x_i)\; \wedge\; \bigoplus_{i=1}^{n}{\hspace{-1mm}_{f(\mathbf{a})}}\ x_i.
\end{equation}

\noindent
The functions $h_{\mathbf{a}}$ can be described as follows: 

\begin{lemma}\label{lem1}
The function $h_{\mathbf{a}}$ is an aggregation function being composed of some aggregation functions defined by $\eqref{e1}$ and $\eqref{e2}$, respectively. Moreover, for all $\mathbf{x}=(x_1,\dots,x_n)\in L^n,$
\begin{equation}\label{e4}
h_{\mathbf{a}}(\mathbf{x})=\begin{cases}&1, \text{ if }\ \mathbf{x}=(1,\dots,1);\\
                                        &f(\mathbf{a}),\text{ if }\ \mathbf{x}\geq \mathbf{a}, (1,\dots,1)\neq\mathbf{x}\neq (0,\dots,0);\\   
                                        &0, \text{ if }\ \mathbf{x}\ngeq\mathbf{a} \text{ or } \mathbf{x}=(0,\dots,0).    
              \end{cases}
\end{equation}
\end{lemma}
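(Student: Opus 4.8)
My plan is to prove the explicit description \eqref{e4} first, by a direct case analysis on the argument $\mathbf{x}$, and then to deduce membership of $h_{\mathbf{a}}$ in the aggregation clone from the very shape of \eqref{e3}. Before splitting into cases I would record two elementary facts. First, for $i\in J_{\mathbf{a}}$ we have $a_i\neq 0$, so $x_i\geq a_i$ already forces $x_i\neq 0$ (otherwise $0\geq a_i$ would give $a_i=0$); hence $\chi_{a_i}(x_i)=1$ exactly when $x_i\geq a_i$. Since for $i\notin J_{\mathbf{a}}$ the inequality $x_i\geq a_i=0$ holds automatically, the meet factor $\bigwedge_{i\in J_{\mathbf{a}}}\chi_{a_i}(x_i)$ equals $1$ precisely when $\mathbf{x}\geq\mathbf{a}$ and equals $0$ otherwise; the index set $J_{\mathbf{a}}$ is nonempty because $\mathbf{a}>(0,\dots,0)$, so this meet is genuinely taken over a nonempty family. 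Second, the repeated composition $\bigoplus_{i=1}^{n}{}_{f(\mathbf{a})}\,x_i$ returns $0$ at $(0,\dots,0)$, returns $1$ at $(1,\dots,1)$, and returns $f(\mathbf{a})$ at every tuple strictly between these bounds, which is the displayed property of the repeated $\oplus$-composition recalled above.

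With these two facts the three lines of \eqref{e4} fall out by inspection. If $\mathbf{x}=(1,\dots,1)$ then $\mathbf{x}\geq\mathbf{a}$, so both factors equal $1$ and $h_{\mathbf{a}}(\mathbf{x})=1$. If $\mathbf{x}\geq\mathbf{a}$ with $(0,\dots,0)\neq\mathbf{x}\neq(1,\dots,1)$, the meet factor is again $1$ while the $\oplus$-factor is $f(\mathbf{a})$, so $h_{\mathbf{a}}(\mathbf{x})=1\wedge f(\mathbf{a})=f(\mathbf{a})$. Finally, if $\mathbf{x}=(0,\dots,0)$ then the $\oplus$-factor is $0$ and annihilates the meet, whereas if $\mathbf{x}\ngeq\mathbf{a}$ the meet factor is $0$; in either subcase $h_{\mathbf{a}}(\mathbf{x})=0$. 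I would note in passing that these three cases are disjoint and exhaustive --- in particular $(0,\dots,0)\ngeq\mathbf{a}$ since $\mathbf{a}>(0,\dots,0)$ --- so \eqref{e4} is well posed.

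For the remaining assertion I would avoid checking monotonicity and the boundary conditions by hand. By construction $h_{\mathbf{a}}$ is a term composition, in the sense of \eqref{eq2}, of the unary functions $\chi_{a_i}$ from \eqref{e1}, the binary function $\oplus_{f(\mathbf{a})}$ from \eqref{e2}, and the lattice meet $\wedge$. Each of these is an aggregation function on $L$ --- the first two by the remarks following \eqref{e1} and \eqref{e2}, and $\wedge$ because it is monotone with $0\wedge 0=0$ and $1\wedge 1=1$ --- and the aggregation functions on $L$ form a clone, hence are closed under composition. Consequently $h_{\mathbf{a}}$ is an aggregation function, composed precisely of the functions \eqref{e1} and \eqref{e2} (together with $\wedge$), as claimed.

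The one spot that needs attention is the boundary regime $f(\mathbf{a})\in\{0,1\}$ in the second auxiliary fact: the quoted characterisation of $\bigoplus_{i=1}^{n}{}_{b}\,x_i$ was stated only for $0<b<1$, so I would confirm separately that $\oplus_0$ and $\oplus_1$ still return $b$ on strictly intermediate tuples. This is immediate from \eqref{e2}, whose ``otherwise'' clause yields $b$ whether or not $b$ is interior, but flagging it explicitly keeps the middle line of \eqref{e4} airtight; the rest of the argument is entirely routine.
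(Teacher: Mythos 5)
Your proof is correct and follows essentially the same route as the paper's: you show the meet factor $\bigwedge_{i\in J_{\mathbf{a}}}\chi_{a_i}(x_i)$ acts as the indicator of $\mathbf{x}\geq\mathbf{a}$ (using $a_i\neq 0$ for $i\in J_{\mathbf{a}}$), let the $\oplus_{f(\mathbf{a})}$ factor handle the boundary tuples, and obtain the aggregation property from closure of the aggregation clone under composition. If anything, you are slightly more careful than the paper, which applies the displayed characterisation of $\bigoplus_b$ (stated only for $0<b<1$) at $b=f(\mathbf{a})$ without comment, whereas you explicitly check that the cases $f(\mathbf{a})\in\{0,1\}$ still give $f(\mathbf{a})$ on strictly intermediate tuples.
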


\begin{proof}
Obviously, $h_{\mathbf{a}}$ is a composition of some aggregation functions defined by (\ref{e1}) and (\ref{e2}). This yields that $h_{\mathbf{a}}(0,\dots,0)=0$ as well as $h_{\mathbf{a}}(1,\dots,1)=1$.

Further, assume that $(1,\dots,1)\neq \mathbf{x}\geq\mathbf{a}$. Recall that $\mathbf{x}\geq\mathbf{a}$ if and only if $x_i\geq a_i$ for all $i\in J_{\mathbf{a}}$. In this case, $\chi_{a_i}(x_i)=1$ for each index $i\in J_{\mathbf{a}}\neq\emptyset$, since no $a_i$ is equal to zero. Also $(1,\dots,1)\neq \mathbf{x}\neq (0,\dots,0)$ implies $\bigoplus\limits_{i=1}^{n}{\hspace{-1.2mm}_{f(\mathbf{a})}} x_i=f(\mathbf{a})$. Hence we obtain 
$$ h_{\mathbf{a}}(\mathbf{x})=\bigwedge_{i\in J_{\mathbf{a}}}\chi_{a_i}(x_i)\; \wedge\; \bigoplus_{i=1}^{n}{\hspace{-1mm}_{f(\mathbf{a})}}\ x_i=1\wedge f(\mathbf{a})= f(\mathbf{a}).$$

If $\mathbf{x}\ngeq\mathbf{a}$, then $x_i\ngeq a_i$ for some index $i\in J_{\mathbf{a}}$. Consequently $\chi_{a_i}(x_i)=0$, which yields 
$$ h_{\mathbf{a}}(\mathbf{x})=\bigwedge_{i\in J_{\mathbf{a}}}\chi_{a_i}(x_i)\; \wedge\; \bigoplus_{i=1}^{n}{\hspace{-1mm}_{f(\mathbf{a})}}\ x_i=0\wedge \bigoplus_{i=1}^{n}{\hspace{-1mm}_{f(\mathbf{a})}}\ x_i= 0.$$

\end{proof}

Denote by $L^n_{*}$ the set of all elements between the bottom and the top element of $L^n$, i.e., $L^n_{*}=L^n\setminus\{(0,\dots,0),(1,\dots,1)\}$. 

\begin{lemma}\label{lem2}
Let $f\colon L^n \to L$ be an aggregation function and for all $\mathbf{a}\in L^n_{*}$, $h_{\mathbf{a}}$ be the function defined by $(\ref{e3})$. Then 
\begin{equation}\label{e41}
f(\mathbf{x})=\bigvee_{\mathbf{a}\in L^n_{*}}h_{\mathbf{a}}(\mathbf{x})
\end{equation}
for all $\mathbf{x}\in L^n$. 
\end{lemma}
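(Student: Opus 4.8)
The plan is to prove the identity \eqref{e41} pointwise, by splitting $L^n$ into its two extremal points and the ``interior'' set $L^n_{*}$, and in each case evaluating the right-hand supremum with the help of the explicit description of $h_{\mathbf{a}}$ furnished by Lemma \ref{lem1}.

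First I would dispose of the two boundary points. If $\mathbf{x}=(0,\dots,0)$, then by the third line of \eqref{e4} every $h_{\mathbf{a}}(\mathbf{x})=0$, so $\bigvee_{\mathbf{a}\in L^n_{*}}h_{\mathbf{a}}(\mathbf{x})=0=f(0,\dots,0)$ by the boundary condition on $f$. Dually, if $\mathbf{x}=(1,\dots,1)$, the first line of \eqref{e4} gives $h_{\mathbf{a}}(\mathbf{x})=1$ for every $\mathbf{a}$, whence the supremum equals $1=f(1,\dots,1)$. Thus both extremal cases follow at once from Lemma \ref{lem1} together with the boundary conditions.

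The substantive case is $\mathbf{x}\in L^n_{*}$. Here Lemma \ref{lem1} tells us that $h_{\mathbf{a}}(\mathbf{x})=f(\mathbf{a})$ whenever $\mathbf{a}\leq\mathbf{x}$, and $h_{\mathbf{a}}(\mathbf{x})=0$ whenever $\mathbf{a}\nleq\mathbf{x}$; consequently the right-hand side of \eqref{e41} collapses to $\bigvee\{f(\mathbf{a}):\mathbf{a}\in L^n_{*},\ \mathbf{a}\leq\mathbf{x}\}$. To identify this supremum with $f(\mathbf{x})$ I would argue by two inequalities. For the upper bound, monotonicity of $f$ yields $f(\mathbf{a})\leq f(\mathbf{x})$ for every $\mathbf{a}\leq\mathbf{x}$, so $f(\mathbf{x})$ is an upper bound of the joined set. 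For the reverse inequality, observe that $\mathbf{x}$ itself lies in $L^n_{*}$ and satisfies $\mathbf{x}\leq\mathbf{x}$, so the index $\mathbf{a}=\mathbf{x}$ actually occurs in the join and contributes the term $f(\mathbf{x})$; hence the supremum is at least $f(\mathbf{x})$. Combining the two inequalities gives the asserted equality.

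I do not expect a genuine obstacle here: the whole argument is a short case analysis resting on Lemma \ref{lem1}. The only points demanding a little care are the bookkeeping at the boundaries --- checking that the three cases of \eqref{e4} line up correctly with the boundary values of $f$ --- and, in the interior case, verifying that the diagonal index $\mathbf{a}=\mathbf{x}$ is genuinely admissible (it is, precisely because $\mathbf{x}\in L^n_{*}$), since this is exactly what forces the supremum to be \emph{attained} rather than merely bounded above by $f(\mathbf{x})$.
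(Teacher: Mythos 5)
Your proof is correct and follows essentially the same route as the paper's: both dispose of the two extremal points via Lemma \ref{lem1} and the boundary conditions, then for $\mathbf{x}\in L^n_{*}$ collapse the join to $\bigvee\{f(\mathbf{a}):\mathbf{a}\in L^n_{*},\ \mathbf{a}\leq\mathbf{x}\}$ and conclude by monotonicity together with the observation that the index $\mathbf{a}=\mathbf{x}$ itself occurs in the join. Your explicit remark that admissibility of the diagonal index $\mathbf{a}=\mathbf{x}$ is what makes the supremum attained is exactly the point the paper phrases as ``$\mathbf{x}$ represents the greatest element of the set $\{\mathbf{a}\in L^n_{*}:\mathbf{a}\leq\mathbf{x}\}$.''
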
  

\begin{proof}
Let $\mathbf{x}\in L^n$ be an arbitrary element. By previous lemma, if $\mathbf{x}=(0,\dots,0)$ or $\mathbf{x}=(1,\dots,1)$, then $\bigvee_{\mathbf{a}\in L^n_{*}}h_{\mathbf{a}}(\mathbf{x})$ gives the corresponding boundary values. Thus we may assume that $\mathbf{x}\in L^n_{*}$. Using (\ref{e4}) of Lemma \ref{lem1} we obtain
$$ \bigvee_{\mathbf{a}\in L^n_{*}} h_{\mathbf{a}}(\mathbf{x})=\bigvee_{\substack{\mathbf{a}\in L^n_{*}\\ \mathbf{a}\leq \mathbf{x}}} h_{\mathbf{a}}(\mathbf{x})\ \vee \ \bigvee_{\substack{\mathbf{a}\in L^n_{*}\\ \mathbf{a}\nleq \mathbf{x}}} h_{\mathbf{a}}(\mathbf{x})= \bigvee_{\substack{\mathbf{a}\in L^n_{*}\\ \mathbf{a}\leq \mathbf{x}}} f(\mathbf{a}) \ \vee \ \bigvee_{\substack{\mathbf{a}\in L^n_{*}\\ \mathbf{a}\nleq \mathbf{x}}} 0= \bigvee_{\substack{\mathbf{a}\in L^n_{*}\\ \mathbf{a}\leq \mathbf{x}}} f(\mathbf{a}).$$
Since the function $f$ is monotone and $\mathbf{x}$ represents the greatest element of the set $\{\mathbf{a}\in L^n_{*}: \mathbf{a}\leq x\}$, it follows that $f(\mathbf{a})\leq f(\mathbf{x})$ for all $\mathbf{a}\in L^n_{*}$, $\mathbf{a}\leq \mathbf{x}$. Hence 
$$ \bigvee_{\mathbf{a}\in L^n_{*}} h_{\mathbf{a}}(\mathbf{x})= \bigvee_{\substack{\mathbf{a}\in L^n_{*}\\ \mathbf{a}\leq \mathbf{x}}} f(\mathbf{a})=f(\mathbf{x}),$$ which completes the proof.

\end{proof}

As a consequence of this lemma we obtain the following main result:

\begin{theorem}\label{thm1}

Let $L$ be a finite lattice. Then the aggregation clone $\mathcal{C}_L$ on $L$ is generated by the lattice operations $\vee$, $\wedge$, by functions $\chi_a$, $a\in L$, defined by $(\ref{e1})$ and by functions $\oplus_b$, $b\in L$, defined by $(\ref{e2})$. 

\end{theorem}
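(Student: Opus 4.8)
The plan is to establish the two inclusions $\mathcal{D}\subseteq\mathcal{C}_L$ and $\mathcal{C}_L\subseteq\mathcal{D}$, where $\mathcal{D}$ denotes the clone generated by the set $G=\{\vee,\wedge\}\cup\{\chi_a:a\in L\}\cup\{\oplus_b:b\in L\}$, giving the equality $\mathcal{D}=\mathcal{C}_L$ as claimed. The bulk of the combinatorial work has already been carried out in Lemmas \ref{lem1} and \ref{lem2}, so the argument is mainly a matter of assembling them.

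For the inclusion $\mathcal{D}\subseteq\mathcal{C}_L$ I would first recall that the aggregation clone is $\mathcal{C}_L=\mathsf{Pol}\{\leq,0,1\}$ and is therefore genuinely a clone: it is closed under composition and contains every projection $p_i^n$ (each projection is monotone and satisfies the boundary conditions, hence is an aggregation function). It then suffices to verify that each generator in $G$ is itself an aggregation function. The lattice operations are monotone with $0\vee 0=0\wedge 0=0$ and $1\vee 1=1\wedge 1=1$, while the functions $\chi_a$ and $\oplus_b$ were already noted to be aggregation functions right after their definitions $(\ref{e1})$ and $(\ref{e2})$. Since $G\subseteq\mathcal{C}_L$ and $\mathcal{C}_L$ is a clone, the least clone $\mathcal{D}$ containing $G$ must be contained in $\mathcal{C}_L$.

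For the reverse inclusion I would take an arbitrary aggregation function $f$ and show $f\in\mathcal{D}$, splitting on the arity. If $f\colon L^n\to L$ with $n\geq 2$, then Lemma \ref{lem2} yields $f(\mathbf{x})=\bigvee_{\mathbf{a}\in L^n_*}h_{\mathbf{a}}(\mathbf{x})$, and by Lemma \ref{lem1} each $h_{\mathbf{a}}$ is built from the generators: the meet $\bigwedge_{i\in J_{\mathbf{a}}}\chi_{a_i}(x_i)$ is an iterate of the associative binary $\wedge$ applied to the functions $\chi_{a_i}\circ p_i^n$, and the term $x_1\oplus_{f(\mathbf{a})}\cdots\oplus_{f(\mathbf{a})}x_n$ is an iterate of the associative binary operation $\oplus_{f(\mathbf{a})}\in G$ applied to projections; combining the two by $\wedge$ shows $h_{\mathbf{a}}\in\mathcal{D}$. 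Because $L$ is finite, the index set $L^n_*$ is finite, so the supremum in Lemma \ref{lem2} is a finite iterate of the binary operation $\vee\in G$, and hence $f\in\mathcal{D}$.

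The only case not covered by this decomposition is $n=1$, since Lemmas \ref{lem1} and \ref{lem2} presuppose $n\geq 2$; this is the single point I expect to require separate (if easy) attention. Given a unary aggregation function $f\colon L\to L$, I would pass to the binary function $\tilde f(x,y):=f(x)$, which is again monotone and satisfies the boundary conditions, hence is a binary aggregation function, so $\tilde f\in\mathcal{D}$ by the previous paragraph. Composing with the (identity) unary projection in both coordinates gives $f(x)=\tilde f(x,x)$, whence $f\in\mathcal{D}$. This yields $\mathcal{C}_L\subseteq\mathcal{D}$ and, together with the first inclusion, the desired equality. The genuinely nontrivial ingredients are Lemmas \ref{lem1} and \ref{lem2}; the theorem itself needs little beyond bookkeeping, the one subtlety being the reduction of the unary case to the binary one.
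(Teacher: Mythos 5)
Your proposal is correct and follows essentially the same route as the paper, which states the theorem as an immediate consequence of Lemmas \ref{lem1} and \ref{lem2}: each $h_{\mathbf{a}}$ is a composition of the generators, and the finite join of Lemma \ref{lem2} is a finite iterate of $\vee$. Your explicit reduction of the unary case to the binary one (via $\tilde f(x,y)=f(x)$) handles a point the paper passes over silently, since its lemmas assume $n\geq 2$.
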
 

Let us note that the duality principle for lattices allows to describe another generating set of aggregation functions. For an element $a\in L$, define a function $\mu_a\colon L\to L$ by
\begin{equation}\label{eqn1}
\mu_a(x)=\begin{cases}&0, \text{ if }x\leq a, x\neq 1; \\
             &1, \text{ otherwise.}\\   
\end{cases}
\end{equation} 

For $\mathbf{a}=(a_1,\dots,a_n)\in L^n$ we denote by $\hat{J}_{\mathbf{a}}=\{1\leq i\leq n: a_i\neq 1\}$.

Similarly, given an integer $n\geq 2$ and $f\colon L^n\to L$ an aggregation function, for any $\mathbf{a}\in L^n_*$ we define the function
$g_{\mathbf{a}}\colon L^n\to L$ by 

\begin{equation}\label{eqn2}
g_{\mathbf{a}}(x_1,\dots,x_n)=\bigvee_{i\in \hat{J}_{\mathbf{a}}}\mu_{a_i}(x_i)\; \vee\; \bigoplus_{i=1}^{n}{\hspace{-1mm}_{f(\mathbf{a})}}\ x_i.
\end{equation}

An analogous assertion to Lemma \ref{lem1} holds for the functions $g_{\mathbf{a}}$, $\mathbf{a}\in \hat{J}_{\mathbf{a}}$. In this case
\begin{equation}\label{eqn3}
g_{\mathbf{a}}(\mathbf{x})=\begin{cases}&0, \text{ if }\ \mathbf{x}=(0,\dots,0);\\
                                        &f(\mathbf{a}),\text{ if }\ \mathbf{x}\leq \mathbf{a}, (1,\dots,1)\neq\mathbf{x}\neq (0,\dots,0);\\   
                                        &1, \text{ if }\ \mathbf{x}\nleq\mathbf{a} \text{ or } \mathbf{x}=(1,\dots,1),    
              \end{cases}
\end{equation}
and $f$ can be expressed as
\begin{equation}\label{eqn41}
f(\mathbf{x})=\bigwedge_{\mathbf{a}\in L^n_{*}}g_{\mathbf{a}}(\mathbf{x})
\end{equation}
for all $\mathbf{x}\in L^n$. Hence, the aggregation clone $\mathcal{C}_L$ can be generated by the set of functions consisting of the lattice operations and by the functions defined by (\ref{eqn1}) and (\ref{e2}). 

It is worth noticing that the formulae (\ref{e41}) and (\ref{eqn41}) remain valid also if $L$ is infinite. In contrast to a finite case, there are infinite suprema in (\ref{e41}), and infinite infima in (\ref{eqn41}) respectively, which can be understood as a kind of limit process. This fact is especially important when considering a classical case, i.e. $L$ being the unit interval of reals. Obviously, this case deserves a much deeper study on its own and it will be the objective of the future research.  

We illustrate the proposed method of generating the aggregation clone on the following example.

\begin{example}\label{example3}
Denote by $f$ the aggregation function depicted in Figure \ref{fig3}. Recall that $f\colon L^2\to L$ where $L$ is the three-element chain with $0<a<1$. 
In order to generate the function $f$ we have to define the corresponding binary functions $h_{\mathbf{a}}$ for all $\mathbf{a}\in L^2_{*}$. 
Denoting the variables by $x$ and $y$, respectively, expression \eqref{e3} gives the following seven functions:
\begin{equation*}
\begin{aligned}
 & h_{(0,a)}(x,y)=\chi_{a}(y)\wedge (x\oplus_0 y),\ h_{(a,0)}(x,y)=\chi_{a}(x)\wedge (x\oplus_0 y), \\  
 & h_{(a,a)}(x,y)=(\chi_{a}(x)\wedge \chi_a(y))\wedge (x\oplus_0 y),\ h_{(0,1)}(x,y)=\chi_{1}(y)\wedge (x\oplus_a y),\\ 
 & h_{(1,0)}(x,y)=\chi_{1}(x)\wedge (x\oplus_a y),\ h_{(a,1)}(x,y)=(\chi_{a}(x)\wedge \chi_1(y))\wedge (x\oplus_1 y),\\
 & h_{(1,a)}(x,y)=(\chi_{1}(x)\wedge\chi_{a}(y))\wedge (x\oplus_1 y).
\end{aligned}
\end{equation*}
Consequently, applying Lemma \ref{lem2}, we obtain the desired expression of the function $f$:  
\begin{equation*}
\begin{aligned}
f(x,y)&=\big( \chi_{a}(y)\wedge (x\oplus_0 y) \big)\vee \big( \chi_{a}(x)\wedge (x\oplus_0 y)\big) \vee \big( \chi_{1}(y)\wedge (x\oplus_a y)\big) \\
 &\vee \big((\chi_{a}(x)\wedge \chi_a(y))\wedge (x\oplus_0 y) \big) \vee \big( \chi_{1}(x)\wedge (x\oplus_a y)\big) \\
&\vee \big( (\chi_{a}(x)\wedge \chi_1(y))\wedge (x\oplus_1 y)\big)\vee \big( (\chi_{1}(x)\wedge\chi_{a}(y))\wedge (x\oplus_1 y)\big).
\end{aligned}
\end{equation*}
\end{example}

In connection with the above described generating sets, one can naturally ask whether it is possible to generate the aggregation clone without certain types of binary functions. We show that unary aggregation functions together with the lattice operations do not generate $\mathcal{C}_L$. In particular, this implies that at least some $n$-ary function, $n\geq 2$, different from the lattice operations, must be used in order to generate the full clone. 

\begin{theorem}
Let $L$ be a lattice with at least three elements. Then the set consisting of the lattice operations and all unary aggregation functions does not generate the aggregation clone $\mathcal{C}_L$. 
\end{theorem}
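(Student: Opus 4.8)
The plan is to exhibit a single relation that is preserved by every proposed generator but violated by some genuine aggregation function, and then to invoke the fact that the set of all operations preserving a fixed relation is a clone. Since $L$ is bounded with $0\neq 1$ and has at least three elements, I can fix an element $b\in L$ with $0<b<1$ (the third element of any bounded lattice necessarily lies strictly between the bounds). The invariant I would use is the two-element subset $\rho=\{0,1\}\subseteq L$, regarded as a unary relation on $L$ in the sense of the $\mathsf{Pol}/\mathsf{Inv}$ framework introduced earlier.

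First I would verify that all proposed generators lie in $\mathsf{Pol}\,\rho$. For the lattice operations this is immediate: $\{0,1\}$ forms a two-element sublattice of $L$, so $0\wedge 1=0$, $0\vee 1=1$, $0\wedge 0=0$, etc., all remain in $\rho$. For an arbitrary unary aggregation function $u$, the boundary conditions give $u(0)=0$ and $u(1)=1$, so $u$ maps $\rho$ into $\rho$. Hence each generator preserves $\rho$.

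Next, since $\mathsf{Pol}\,\rho$ is a clone (it contains all projections and is closed under composition, both of which trivially preserve a unary relation), and since it contains the entire generating set, it must contain the whole clone generated by the lattice operations together with the unary aggregation functions. Equivalently, every function obtainable from these generators sends tuples over $\{0,1\}$ to elements of $\{0,1\}$. Finally I would produce an aggregation function lying outside $\mathsf{Pol}\,\rho$: the operation $\oplus_b$ from \eqref{e2} is an aggregation function on $L$, yet $0\oplus_b 1=b\notin\{0,1\}$ because $0<b<1$. Thus $\oplus_b\notin\mathsf{Pol}\,\rho$, so $\oplus_b$ cannot belong to the generated clone, whereas $\oplus_b\in\mathcal{C}_L$. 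This shows the generated clone is a proper subclone of $\mathcal{C}_L$, proving the theorem; moreover, since the lattice operations already preserve $\rho$, any generating set for $\mathcal{C}_L$ must contain a non-lattice operation of arity at least $2$, which is the asserted consequence.

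As for difficulty, the entire argument is short once the right invariant is identified, so essentially the only content is the choice $\rho=\{0,1\}$; the verification steps are routine. I therefore expect no genuine obstacle beyond recognizing that a single unary relation, namely the set of bounds, already separates $\oplus_b$ from everything built out of unary aggregations and the lattice operations.
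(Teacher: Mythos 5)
Your proof is correct, and it takes a genuinely leaner route than the paper's. Both arguments rest on the same two ingredients --- an invariant preserved by all the proposed generators, and the witness $\oplus_b$ with $0<b<1$, which violates it --- but the machinery differs. The paper first notes that the generated clone $\mathcal{D}$ contains a majority function (built from $\vee,\wedge$), invokes the Baker--Pixley theorem to characterize membership in $\mathcal{D}$ by preservation of subalgebras of the direct square $(L^2;\mathcal{D})$, and then exhibits the subalgebra $B=\{(0,0),(1,0)\}$ of $L^2$, which $\oplus_a$ fails to preserve since $(1\oplus_a 0, 0\oplus_a 0)=(a,0)\notin B$. You instead take the unary relation $\rho=\{0,1\}$ and use only the elementary Galois-connection fact that $\mathsf{Pol}\,\rho$ is a clone: it contains every generator (lattice operations because $\{0,1\}$ is a sublattice, unary aggregation functions because of the boundary conditions), hence it contains the whole generated clone, and $0\oplus_b 1=b\notin\rho$ finishes the argument. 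Your route buys two things: first, it avoids Baker--Pixley entirely --- and indeed the paper only ever uses the trivial direction of that theorem (a member of a clone preserves all subalgebras of finite powers), so the heavy tool is dispensable; second, Baker--Pixley as stated in the paper requires a \emph{finite} underlying set, whereas the theorem is asserted for an arbitrary lattice with at least three elements, so your argument covers the infinite case with no caveat, while the paper's proof, read literally, does not. Your closing remark that any generating set must contain a non-lattice operation of arity at least $2$ is also sound and matches the comment the paper makes just before the theorem.
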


\begin{proof}

Let $L$ be a lattice with at least three elements. Denote by $\mathcal{D}$ the clone generated by the set of all unary aggregation functions of $L$,  together with the lattice operations $\vee$ and $\wedge$. Since $\mathcal{D}$ contains the lattice operations, it follows that the majority functions given by \eqref{eq3a} and \eqref{eq3b} belong to $\mathcal{D}$. These functions represent $3$-ary near-unanimity functions on $L$, hence by Baker-Pixley theorem, a function $f\in {\mathcal O}_L$ belongs to $\mathcal{D}$ if and only if $f$ preserves all the subalgebras of the direct square $(L^2;\mathcal{D})$.

Consider the subset $B=\{(1,0),(0,0)\}$ of $L^2$. Any unary aggregation function fulfils the boundary conditions, thus it is easily seen that $B$ is closed with respect to all unary aggregation functions of $L$. Evidently, $B$ is closed with respect to lattice operations $\vee$ and $\wedge$, hence $B$ is a subalgebra of $(L^2;\mathcal{D})$. 

Let $a\in L$ be an arbitrary element with $0<a<1$. The function $\oplus_a$ does not preserve $B$: indeed, we have $(1,0),(0,0)\in B$ and applying the function $\oplus_a$ we obtain $(1\oplus_a 0,0\oplus_a 0)=(a,0)\not\in B$. Consequently, as $\oplus_a$ is the aggregation function does not preserving all the subalgebras of $(L^2,\mathcal{D})$, it follows that $\mathcal{C}_L\neq\mathcal{D}$. 
\end{proof}

In the sequel, we try to present an application of our results in connection with some kind of optimality functions defined on networks, represented by median graphs. The most common problems studied in network location theory are the $p$-median problems. In this type of problems there are $n$ customers and the objective is to locate $p$ service facilities to minimize the sum of $n$ service distances, provided a customer is served only by the closest facility. 

Recall, that a graph $H=(V(H),E(H))$ is called a median graph, if it is connected and for every triple of its vertices there is a unique vertex $w$, called median, such that $w\in V(H)$ lies simultaneously on a shortest path between any two of them. Given a triple $(x,y,z)\in V(H)^3$ of vertices, for the corresponding unique median vertex $w$ the expression 
$$ d(w,x)+d(w,y)+d(w,z),$$ 
involving the graph metric $d$, attains a minimal value. Median graphs arise naturally in the study of networks, ordered sets and discrete distributive lattices, with many applications in various fields, e.g., in the network location theory or in human genetics, where they are used for an analysis of the mitochondrial DNA.
Figure \ref{fig2} shows an example of a median graph. The vertex $w$ denotes the median of three vertices $x,y$ and $z$. 

\begin{figure}
\begin{center}
\includegraphics[scale=1.25]{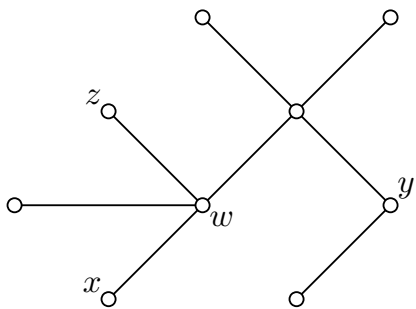}
\caption{An example of a median graph.}
\label{fig2}
\end{center}
\end{figure}

Further, we recall the notion of a retract of a graph. By a retraction $\varphi$ of a graph $G=(V(G),E(G))$, we mean a homomorphism of $G$ into itself with the property $\varphi^2(u) =\varphi(u)$ for all vertices $u\in V(G)$. The homomorphic image of $G$ under $\varphi$ is called a retract. Note, that if a vertex $v=\varphi(u)$ belongs to the retract, then $\varphi(v)=\varphi(\varphi(u))=\varphi(u)=v$, i.e., $\varphi$ restricted to the retract is the identity mapping.

A special class of median graphs represent hypercubes. The hypercube of dimension $r\geq 1$ is a graph, denoted by $Q_r$, isomorphic to that whose vertex set consists of all $0$-$1$ vectors $(v_1, v_2,\dots, v_r)$, where two vertices are adjacent if and only if they differ in precisely one coordinate. The hypercube $Q_3$ is depicted in Figure \ref{fig4}.

\begin{figure}
\begin{center}
\includegraphics[scale=1.25]{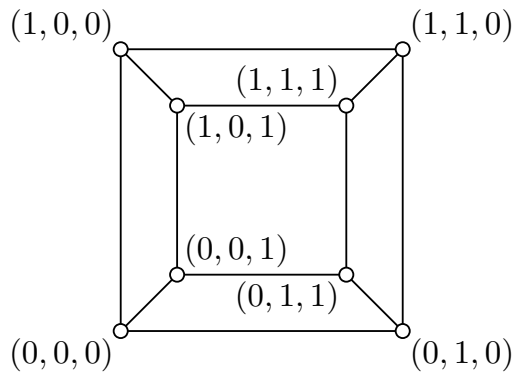}
\caption{The hypercube $Q_3$.}
\label{fig4}
\end{center}
\end{figure}

It is a well-known fact that median graphs can be characterized as retracts of hypercubes, cf. \cite{Ban84}. From the definition of a hypercube $Q_r$, it is easily seen that $Q_r$ is isomorphic to the covering graph of a Boolean algebra with $r$ atoms. This enables to extend the median function from a median graph to a Boolean algebra. Moreover, if $H$ is a retract of $Q_r$ via a retraction $\varphi\colon Q_r\to H$, $(x,y,z)$ is a triple of vertices from $H$ and $w$ is the median in $Q_r$, then $w$ belongs to the retract $H$. Indeed, since $\varphi$ is an edge preserving mapping, for any vertices $u,v\in Q_r$ it can be easily seen that $d(u,v)\geq d(\varphi(u),\varphi(v))$. Hence $$d(w,x)+d(w,y)+d(w,z)\geq d(\varphi(w),\varphi(x))+d(\varphi(w),\varphi(y))+d(\varphi(w),\varphi(z))$$ $$=d(\varphi(w),x)+d(\varphi(w),y)+d(\varphi(w),z).$$ Since $w$ is the unique vertex minimizing the distance from the three vertices $x,y,z$, it follows that $\varphi(w)=w$, i.e., $w\in H$ as well. 

In the hypercube $Q_r$, the median of the vertices $x=(x_1,\dots,x_r)$, $y=(y_1,\dots,y_r)$ and $z=(z_1,\dots,z_r)$ is computed by the ``majority" rule 
$$ f(x,y,z)=w=(w_1,\dots,w_r),\ w_i=\left[ \frac{x_i+y_i+z_i}{3}\right],$$ $[\cdot]$ denoting the ceiling function,
i.e., $w_i=1$ if at least two values from $x_i,y_i$ and $z_i$ are equal to $1$ and $w_i=0$ otherwise.
Considering $Q_r$ as a Boolean algebra with the componentwise order, it is easily seen that $f$ fulfills the boundary conditions and it is non-decreasing in each coordinate, i.e., it is an aggregation function.

This aggregation naturally defines a system of binary aggregation functions $f_v\colon Q_r^2\to Q_r$, $v\in Q_r$, where $f_v(x,y)=f(v,x,y)$. Involving a system of binary functions instead of one ternary can be more convenient in some applications following certain computational aims. Obviously, using the functions $f_v$, for $v\in Q_r$ can be very useful in connection with some decision problems, e.g., in the so-called multi-facility location problems, where several types of median functions are widely involved. However, for other purposes the usage of the other types of aggregation functions can be more appropriate. Assume that also some procedures for deciding whether $x\in Q_r$ is above a given $v\in Q_r$ are available. This represents the system of the unary functions $\chi_v$, $v\in Q_r$.

Considering the functions $\chi_v$, $f_v$ for $v\in Q_r$ and the lattice operations on $Q_r$ as simple computation models, combining the outputs of these particular functions as inputs for other such functions, one can obtain a complex computational model. Naturally, it is important to know which aggregation functions on $Q_r$ can be obtained by such defined complex computational models. Mathematically, this question is equivalent to the problem of characterizing the clone generated by these functions. 

\begin{example}
As an application of Theorem \ref{thm1}, we show that the set of functions $G=\{\chi_v:v\in Q_r\}\cup \{f_v:v\in Q_r\}$ together with the lattice operations generates the full aggregation clone on $Q_r$. Obviously, it suffices to show that each $\oplus_v$, $v\in Q_r$ belongs to the clone generated by the set $G\cup\{\wedge,\vee\}$. For this, given $v\in Q_r$, consider $$g=f_v\big(\chi_{\mathbf{0}}(x\vee y),\chi_{\mathbf{1}}(x\wedge y)\big).$$ 
Analyzing all possibilities, one can easily see that $g(x,y)=\mathbf{0}$ if $x=y=\mathbf{0}$, $g(x,y)=\mathbf{1}$ if $x=y=\mathbf{1}$ and $g(x,y)=v$ otherwise, i.e., $g(x,y)=x\oplus_v y$ for all $x,y\in Q_r$.   

Consequently, the lattice operations, characteristic functions $\chi_v$ and the functions $f_v$, $v\in Q_r$, form a generating basis, i.e. any aggregation function on $Q_r$ can be obtained by (possibly iterated) composition of them.
\end{example}

\section{Conclusion}
In this paper we have shown that aggregation functions on finite lattices form a clone of functions which is 
finitely generated. Moreover, we presented an explicit description of generating sets of functions.
We believe that the proposed approach will be convenient also for an analysis of special classes of aggregation 
functions on lattices or even certain posets. 

In the future work we would like to extend this idea to the study of conjunctive, disjunctive or internal 
aggregation functions.

\section*{Acknowledgments}

The second author was supported by the ESF Fund CZ.1.07/2.3.00/30.0041 and by the Slovak VEGA Grant 2/0028/13, the first author by the international project Austrian Science Fund (FWF)-Grant Agency of the Czech Republic (GA\v{C}R) number I 1923-N25.



\begin{thebibliography}{00}

\bibitem{BP}
Baker K. A., Pixley A. F., Polynomial interpolation and the chinese remainder theorem for algebraic systems, Math. Zeitschrift 143, pp. 165-174, 1975.

\bibitem{Ban84}
Bandelt H.-J. Retracts of hypercubes, Journal of Graph Theory 8, pp. 501-510, 1984.

\bibitem{Burris}
Burris S., Sankappanavar H. P., A Course in Universal Algebra, Springer-Verlag, 1981.

\bibitem{Cs}
Cs\'ak\'any B., Minimal clones - a minicourse, Algebra Universalis 54, pp. 73-89, 2005.

\bibitem{DeBM}
De Baets B., Mesiar R., Triangular norms on product lattices, Fuzzy Sets and Systems 104, pp. 61-76, 1999.

\bibitem{Dem}
Demirci M., Aggregation operators on partially ordered sets and their categorical foundations, Kybernetika 42, 261-277, 2006.

\bibitem{GMRP}
Grabisch M., Marichal J.-L., Mesiar R., Pap E., Aggregation Functions, Cambridge University Press, Cambridge, 2009.

\bibitem{G1}
Gr\"atzer G., Lattice Theory: Foundation, Birkh\"auser, Basel, 2011.

\bibitem{G2}
Gr\"atzer G., Wehrung F. (Eds.), Lattice Theory: Special Topics and Applications, Vol 1, Birkh\"auser, Basel, 2014. 

\bibitem{KP}
Kaarli K., Pixley A. F., Polynomial completeness in algebraic systems, Chapman \& Hall / CRC, Boca Raton, Florida, 2001.

\bibitem{M2}
Karacal F., Mesiar R., Uninorms on bounded lattices, Fuzzy Sets and Systems 261, pp. 33-43, 2015.

\bibitem{KPS}
Kerkhoff S., P\"oschel R., Schneider F.M., A Short Introduction to Clones, Electronic Notes in Theoretical Computer Science 303, pp. 107–120, 2014. 

\bibitem{KM}
Komorn\'ikov\'a M., Mesiar R., Aggregation functions on bounded partially ordered sets and their classification, Fuzzy Sets and Systems 175, pp. 48-56, 2011.

\bibitem{Lau}
Lau D., Function algebras on finite sets, Springer-Verlag, Berlin, 2006.

\bibitem{McKenzie}
McKenzie R., McNulty  G., Taylor W., Algebras, Lattices and Varieties, Vol. I, Wadsworth \& Brooks/Cole, Monterey, California, 1987.

\bibitem{Post}
Post E. L., The Two-Valued Iterative Systems of Mathematical Logic, Annals of Mathematics Studies, no. 5, Princeton University Press, Princeton, N. J., 1941. 

\bibitem{Ros}
Rosenberg I. G., \"Uber die funktionale Vollst\"andigkeit in den mehrwertigen Logiken. Struktur der
Funktionen von mehreren Ver\"anderlichen auf endlichen Mengen, Rozpravy \v{C}eskoslovensk\'e Akad. V\v{e}d. \v{R}ada Mat. P\v{r}\'irod. V\v{e}d 80, pp. 3-93, 1970.

\bibitem{M1}
Saminger-Platz S., Klement E. P., Mesiar R.,On extensions of triangular norms on bounded lattices, Indagationes Mathematicae 19(1), pp. 135-150, 2008.

\bibitem{Ta}
Tardos G., A maximal clone of monotone operations which is not finitely generated, Order 3, pp. 211-218, 1986.

\end{thebibliography}
\end{document}